\documentclass[12pt]{amsart}

\usepackage{amsfonts,amsmath,amssymb,amsthm}
\usepackage{graphicx} 
\usepackage{xcolor}
\usepackage{mathrsfs}
\usepackage{tikz-cd}
\usepackage{pdflscape} 
\usepackage{hyperref}
\usepackage{makecell} 
\usepackage[all]{xy}

\newcommand{\cT}{\mathcal{T}}

\newtheorem{thm}{Theorem}
\newtheorem{dfn}{Definition}

\newtheorem{cor}{Corollary}
\newtheorem*{non-thm}{Theorem}
\newtheorem{prop}{Proposition}
\newtheorem{remark}{Remark}
\newtheorem{d-prop}{Definition-Proposition}

\usepackage{enumerate}

  {\begin{list}{}{\setlength{\leftmargin}{#1}\setlength{\rightmargin}{0cm}}%
   \item[]}{\end{list}}
   
   \usepackage{mdframed}
\newmdenv[
  linewidth=2pt,          
  linecolor=black,        
  topline=false,          
  bottomline=false,       
  rightline=false,        
  leftline=true,          
  innerleftmargin=10pt,   
  leftmargin=15pt,        
  rightmargin=0pt,        
  skipabove=\topsep,      
  skipbelow=\topsep       
]{summary}

\title{On Duality, Legendre Bundles and Deformations}
\author{N.C. Combe, P.G. Combe, H.K. Nencka }

\newcommand{\Addresses}{{
  \bigskip
  \footnotesize

 N.C. Combe, \textsc{De Vinci Research Center, De Vinci Higher Education, Paris, France.}

  \medskip

P.G. Combe, \textsc{Baltic Institute of Mathematics.}
 
  \medskip

H.K. Nencka, \textsc{Baltic Institute of Mathematics.}

}}

\begin{document}
\maketitle
\begin{abstract}
    We introduce the \textit{Legendre bundle}, a geometric structure 
encoding the essential duality of dually flat (Hessian) manifolds, 
and demonstrate that both exponential families in information geometry 
and a natural class of quantum field theories --- which we term 
\textit{Hessian QFTs} --- arise as distinct realisations of this 
single framework. The Legendre bundle is shown to carry a canonical 
para-K\"ahler structure. 
\end{abstract}
\section{Introduction}
\subsection{Motivation and object of study.}
The formalism of {\it Legendre duality} appears in settings of independent importance: the dually flat exponential families of information geometry (see for instance \cite{CM,CCN,Ch}), and the coupling space–one-point functions correspondence of quantum field theory \cite{CV}. No geometric framework currently exists that treats these as instances of a single underlying structure. Our goal is to fill this gap by introducing the {\it Legendre bundle} formalism, a geometric object of which both theories are distinct and natural realisations. Both frameworks rest on the same core data: a convex potential $\Phi$ on a parameter space, its Legendre transform yielding dual coordinates, and an exponential pairing defining a canonical family. The Legendre bundle packages these into a single geometric object.

\subsection{Main result.}
We show that dually flat statistical manifolds and coupling spaces of certain quantum field theories are both natural incarnations of a Legendre bundle. The Legendre bundle is shown to be a para‑Kähler vector bundle.

\subsection{Structure of the paper.}
Section~\ref{S:LegendreBundle} defines the Legendre bundle and establishes its equivalence with dually flat manifolds. 

Section~\ref{S:F-Legendrebundle} introduces families of Legendre bundles over the formal disk $\operatorname{Spec} k[[u]]$, where $k$ is a field of characteristic 0 ($\mathbb{R}$ or $\mathbb{C}$).

Section~\ref{S:Main} states and proves the main theorem (Theorem~\ref{T:1}). In the propositions Prop. \ref{P:ParaKahler}, Prop.~\ref{P:FamilyParaKahler} and the Corollary~\ref{C:QFTParaKahler}, we highlight the natural existence of para-K\"ahler structures on the Legendre bundles.

\section{The Legendre bundle}\label{S:LegendreBundle}
We introduce the notion of a (classical) Legendre bundle show that it captures dually flat (Hessian) manifolds, instances of this include exponential families in statistics.
\subsection{Legendre Duality}

We organise the relevant data into a category in order to make the notion of morphism between Legendre structures precise. The category $\mathbf{CPS}$ of convex potential spaces has as objects pairs $(\mathcal{U}, \Psi)$, where $\mathcal{U} \subseteq E$ is an open convex subset of a finite-dimensional vector space $E$ and $\Psi \in C^{\infty}$, where $\Psi : \mathcal{U} \to \mathbb{R} $
is smooth and strictly convex, and morphisms are affine maps preserving the potential up to an additive affine term.

\smallskip 

For $(\mathcal{U}, \Psi) \in \rm Ob(\mathbf{CPS})$, the Legendre conjugate is defined by
\[
\Psi^*(\eta) = \sup_{v \in \mathcal{U}} \bigl( \langle v, \eta \rangle - \Psi(v) \bigr), 
\qquad \eta \in E^*,
\]
where $E^*$ is the dual of $E$, and 
\[
\langle \cdot, \cdot \rangle : E^* \times E \to \mathbb{R}
\]
is the natural pairing. The image
\[
\mathcal{U}^* = d\Psi(\mathcal{U}) \subseteq E^*
\]
is an open convex set. The Legendre map
\[
v \mapsto \eta = d\Psi(v)
\]
is a diffeomorphism $\mathcal{U} \to \mathcal{U}^*$ satisfying the Fenchel--Young identity
\[
\Psi(v) + \Psi^*(\eta) = \langle v, \eta \rangle, 
\qquad \text{whenever } \eta = d\Psi(v).
\]

\begin{remark}
The category $\mathbf{CPS}$ is a full subcategory of the Hessian manifolds category. Indeed, every $(\mathcal{U}, \Psi)$ carries a Hessian metric given by
\[
g_{ij} = \partial_i \partial_j \Psi,
\] where we use the notation $\partial_i=\frac{\partial}{\partial u_i}$, for some chose coordinate system $(u_1,\cdots,u_n)\in \mathcal{U}$ turning thus \,  $\mathcal{U}$ into a Hessian domain (or manifold). Conversely, any Hessian manifold that is affinely diffeomorphic to an open convex set and admits a global potential belongs to $\mathbf{CPS}$.
\end{remark}

\subsection{Dually Flat Manifolds}

Let $(\Theta,\Psi) \in {\rm Ob}(\mathbf{CPS})$, where we assume that $\Theta\subseteq E=\mathbb{R}^n$. Since $E=\mathbb{R}^n$ we may define a coordinate system $\theta=(\theta_1,\cdots,\theta_n)\in \Theta$ where coordinates $\theta_i$ are the standard linear coordinates on $E$, which are global affine coordinates on $\Theta$. 

\begin{remark}
    Note however that, for an arbitrary dually flat manifold which is not a subset of $\mathbb{R}^n$ the existence of such coordinates is not automatic.
\end{remark}

The affine structure on $\Theta$ inherited from $E=\mathbb{R}^n$ defines a flat, torsion-free connection $\nabla^+$ on $T\Theta$: the one for which the coordinate vector fields $\partial_i$ are parallel.

 The Hessian of $\Psi$ defines a Riemannian metric on $\Theta$
\[
g_{ij}(\theta) = \partial_i \partial_j \Psi(\theta).
\]
The Legendre map $\eta = \nabla \Psi(\theta)$ is a diffeomorphism from $\Theta$ onto its image $\Theta^*$ (which is an open convex subset of the dual vector space $E^*$). The coordinates $\eta_i$ on $\Theta^*$ (the images of the Legendre map) are the linear coordinates inherited from the dual vector space $E^*$: \[
\eta_i = \partial_i \Psi(\theta), \qquad \eta \in \Theta^* \subseteq E^*.
\] 
 The dual coordinates induce an affine structure on $\Theta^*$, yielding a dual connection $\nabla^-$. The pair $(\nabla^+, \nabla^-)$ satisfies the duality condition
\[
Z \, g(X, Y) 
= g(\nabla^+_Z X, Y) + g(X, \nabla^-_Z Y)
\]
for all vector fields $X, Y, Z$ on $\Theta$.

We now consider $\Theta$ as a smooth manifold.
\begin{dfn}
A \emph{dually flat manifold} is a Riemannian manifold $(\Theta, g)$ equipped with a pair of torsion-free flat connections $(\nabla^+, \nabla^-)$ satisfying the duality condition above, and admitting local potential functions $\Psi$ and $\Psi^*$ related by the Fenchel--Young identity.
\end{dfn}

\begin{remark}
The connections $\nabla^+$ and $\nabla^-$ are not independent: each determines the other via the metric $g$. The potential $\Psi$ acts as a generating function for the full geometric structure, encoding both the metric and the dual affine coordinates.
\end{remark}

\subsection{(Classical) Legendre bundle}
\begin{dfn}[Legendre bundle]\label{D:Legendre_Bundle}
Let $B$ be a smooth manifold. A \emph{Legendre bundle} over $B$ is a quintuple
\[
(H,\, \langle\!\langle \cdot, \cdot \rangle\!\rangle,\, \nabla^+,\, \nabla^-,\, \Psi)
\]
where:
\begin{enumerate}
    \item $H = H^+ \oplus H^-$ is a vector bundle over $B$ with isomorphisms 
    \[
    H^+ \cong TB, \qquad H^- \cong T^*B;
    \]
    
    \item $\langle\!\langle \cdot, \cdot \rangle\!\rangle$ is a non-degenerate symmetric bilinear form on $H$ such that \begin{itemize}
    \item $
    \langle\!\langle H^+, H^+ \rangle\!\rangle = 0, \qquad
    \langle\!\langle H^-, H^- \rangle\!\rangle = 0$
    \item  and the induced pairing $H^+ \otimes H^- \to \mathcal{O}_B,$ (where $\mathcal{O}_B$ is the structure sheaf of smooth functions on $B$) coincides with the natural evaluation pairing of a covector on a vector (i.e., for $X \in H^+$, $\alpha \in H^-$, $\langle\!\langle X, \alpha \rangle\!\rangle = \alpha(X)$).
    \end{itemize}
    \item $\nabla^+$ and $\nabla^-$ are flat connections on $H^+$ and $H^-$ respectively, satisfying the duality condition
    \[
    Z\langle\!\langle X, \alpha \rangle\!\rangle 
    =
    \langle\!\langle \nabla^+_Z X, \alpha \rangle\!\rangle 
    +
    \langle\!\langle X, \nabla^-_Z \alpha \rangle\!\rangle
    \]
    for all sections $X \in \Gamma(H^+)$, $\alpha \in \Gamma(H^-)$, and vector fields $Z$ on $B$;

    \item Let $\Psi \in C^\infty(B)$ be a strictly convex potential with respect to the flat structure of $\nabla^+$. The associated Legendre morphism
\[
\mathscr{L}_{\Psi} : H^+ \to H^-
\]
is defined in $\nabla^+$-flat coordinates $(\theta^i)$ by
\[
\mathscr{L}_{\Psi}(\partial_j) = (\partial_i \partial_j \Psi)\, d\theta^i,
\]
and extended linearly. In invariant terms, $\mathscr{L}_{\Psi}$ is the unique bundle morphism such that
\[
\langle\!\langle X,\mathscr{L}_{\Psi}(Y) \rangle\!\rangle
=
\langle\!\langle Y,\mathscr{L}_{\Psi}(X) \rangle\!\rangle, \quad \text{and} \quad \mathscr{L}_{\Psi}(\partial_i)=\sum_j(\partial_i\partial_j \Psi)d\theta^j.\] \end{enumerate}

In local $\nabla^+$-flat coordinates $\theta^i$, the dual coordinates are
\[
\eta_i = \partial_i \Psi,
\]
the pairing satisfies
\[
\langle\!\langle \partial_i, d\theta^j \rangle\!\rangle = \delta_i^j,
\]
and the induced metric is the Hessian metric $g = D^2 \Psi.$
\end{dfn}

\subsection{Equivalence with Dually Flat Manifolds}
We show that the Legendre bundle is precisely a dually flat manifold expressed in bundle form. 
\begin{prop}[Equivalence of structures]\label{P:2.4}
Let $B$ be a smooth manifold. The following two structures are equivalent:

\begin{enumerate}
\item A \emph{dually flat (Hessian) manifold} $(B, g, \nabla^+, \nabla^-)$ together with a strictly convex potential $\Psi$ such that $g = D^2 \Psi$ in $\nabla^+$-flat coordinates.

\item A \emph{Legendre bundle} $(H, \langle\!\langle \cdot, \cdot \rangle\!\rangle, \nabla^+, \nabla^-, \Psi)$ over $B$ in the sense of Definition~\ref{D:Legendre_Bundle}.
\end{enumerate}
\end{prop}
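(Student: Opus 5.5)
We prove the equivalence by writing down an explicit construction in each direction and then checking that the two constructions are mutually inverse, up to the evident notion of isomorphism. Throughout we fix $\nabla^+$-flat local coordinates $(\theta^i)$ on $B$. Such coordinates exist in both settings because $\nabla^+$ is flat and torsion-free: in (1) this is part of the definition of a dually flat manifold, and in (2) we transport $\nabla^+$ from $H^+$ to $TB$ via $H^+\cong TB$.

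\emph{From (1) to (2).} We set
\[
H^+ := TB,\qquad H^- := T^*B,\qquad H := TB\oplus T^*B .
\]
On $H$ we take the split pairing
\[
\langle\!\langle X+\alpha,\,Y+\beta\rangle\!\rangle := \alpha(Y)+\beta(X).
\]
This form is symmetric and non-degenerate. Both $H^\pm$ are isotropic for it, and the cross pairing is evaluation, so condition (2) of Definition~\ref{D:Legendre_Bundle} holds.

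We keep $\nabla^+$ on $TB$. On $T^*B$ we put the connection dual to $\nabla^+$, i.e.\ the unique connection $\nabla^-$ for which
\[
Z\,\alpha(X) = \alpha(\nabla^+_Z X) + (\nabla^-_Z\alpha)(X).
\]
This is precisely condition (3). Flatness of this $\nabla^-$ follows from flatness of $\nabla^+$, since $\nabla^-$ kills the forms $d\theta^i$.

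We must relate this $\nabla^-$ to the dual connection of the dually flat structure, which we denote $\tilde\nabla^-$. The musical isomorphism $g^\flat = \mathscr{L}_\Psi$ intertwines $\tilde\nabla^-$ on $TB$ with the $\nabla^+$-dual connection on $T^*B$. Indeed, the duality condition $Zg(X,Y)=g(\nabla^+_ZX,Y)+g(X,\tilde\nabla^-_ZY)$ is literally the statement
\[
\nabla^-_Z\bigl(g^\flat Y\bigr) = g^\flat\bigl(\tilde\nabla^-_Z Y\bigr).
\]
So the data of (1) is faithfully recorded in the bundle.

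Finally we define $\mathscr{L}_\Psi(\partial_j) := \partial_i\partial_j\Psi\, d\theta^i$. This is symmetric with respect to $\langle\!\langle\cdot,\cdot\rangle\!\rangle$ because mixed partial derivatives commute. It is well defined independently of the choice of flat chart, because flat charts differ by affine changes of coordinates, under which the Hessian transforms as a $(0,2)$-tensor. Strict convexity of $\Psi$ is inherited from (1).

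\emph{From (2) to (1).} We use $H^+\cong TB$ to regard $\nabla^+$ as a flat connection on $TB$. We then define
\[
g(X,Y) := \langle\!\langle X,\mathscr{L}_\Psi Y\rangle\!\rangle .
\]
In flat coordinates this equals $\partial_i\partial_j\Psi$, so $g$ is symmetric and positive definite by strict convexity. We then pull $\nabla^-$ back along the isomorphism $\mathscr{L}_\Psi:TB\to T^*B$ to obtain a connection $\tilde\nabla^- := \mathscr{L}_\Psi^{-1}\circ\nabla^-\circ\mathscr{L}_\Psi$ on $TB$. Substituting $\alpha=\mathscr{L}_\Psi Y$ into axiom (3) gives the metric duality condition for $(\nabla^+,\tilde\nabla^-)$.

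The dual potential is $\Psi^*$, the Legendre transform of $\Psi$ in the coordinates $\eta_i=\partial_i\Psi$. The Fenchel--Young identity then holds locally, by the discussion in the Legendre Duality subsection.

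\emph{Main obstacle.} The delicate step is that $\tilde\nabla^-$ must be flat and \emph{torsion-free} on $TB$, with flat coordinates $\eta_i$. Flatness transfers directly from $\nabla^-$. Torsion-freeness is not automatic, and it is exactly where the Hessian form of $\mathscr{L}_\Psi$ enters.

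Our plan is to compute directly. In an arbitrary Legendre bundle, axiom (3) forces $\nabla^-$ to be the dual of $\nabla^+$, so $\nabla^- d\theta^i = 0$. We then compute
\[
\mathscr{L}_\Psi(\partial_{\eta_j}) = d\theta^j ,
\]
using that the Jacobian $\partial\eta/\partial\theta$ is the Hessian of $\Psi$. Hence $\tilde\nabla^-\partial_{\eta_j}=0$. So the $\eta_j$ are $\tilde\nabla^-$-affine coordinates, and $\tilde\nabla^-$ is torsion-free, since its Christoffel symbols vanish in a coordinate frame. Symmetry of the Hessian, i.e.\ $\partial_i\eta_j=\partial_j\eta_i$, is what guarantees that the $\eta_i$ really form a coordinate system.

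\emph{Mutual inverses.} The two constructions are inverse to each other up to the canonical identifications $H^+\cong TB$ and $H^-\cong T^*B$. Going (1)$\to$(2)$\to$(1) returns $g=D^2\Psi$ and the original $\tilde\nabla^-$, by the intertwining identity above. Going (2)$\to$(1)$\to$(2) returns the split pairing and the dual connection, both of which are forced by the axioms.
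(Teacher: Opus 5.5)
Your proposal is correct and follows the same route as the paper. For (1)$\Rightarrow$(2) you take $H=TB\oplus T^*B$ with the split pairing, the $\nabla^+$-dual connection on $T^*B$ and $\mathscr{L}_\Psi=g^\flat$; for (2)$\Rightarrow$(1) you set $g(X,Y)=\langle\!\langle X,\mathscr{L}_\Psi Y\rangle\!\rangle=\partial_i\partial_j\Psi$ in flat coordinates, exactly as the paper does.

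Where the paper only asserts that $\nabla^-$ ``is the connection dual to $\nabla^+$ with respect to $g$'' and that the constructions are mutually inverse, you supply the missing details. These are the transported connection $\mathscr{L}_\Psi^{-1}\circ\nabla^-\circ\mathscr{L}_\Psi$ on $TB$, its torsion-freeness via $\mathscr{L}_\Psi(\partial_{\eta_j})=d\theta^j$, the local Fenchel--Young potential, and the round-trip checks. The one caveat is that ``positive definite by strict convexity'' relies on the paper's implicit convention that strict convexity means a nondegenerate Hessian.
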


The correspondence is given explicitly as follows:

\begin{itemize}
\item Given a dually flat manifold with potential $\Psi$, define
\[
H = TB \oplus T^*B
\]
with its natural pairing. Let $\nabla^+$ be the given flat connection on $TB$, $\nabla^-$ the dual flat connection on $T^*B$, and retain $\Psi$ as the potential. This defines a Legendre bundle.

\item Conversely, given a Legendre bundle, define a Riemannian metric on $B$ by
\[
g(X,Y) = \langle\!\langle X, d\Psi(Y) \rangle\!\rangle.
\]
Using the identifications $H^+ \cong TB$ and $H^- \cong T^*B$, the connections $\nabla^+$ and $\nabla^-$ induce affine connections on $TB$ and $T^*B$. This equips $B$ with the structure of a dually flat manifold with potential $\Psi$.
\end{itemize}

These constructions are mutually inverse and preserve all the structure.

\begin{proof}
1. Assume $(B, g, \nabla^+, \nabla^-, \Psi)$ is a dually flat manifold with strictly convex potential $\Psi$. Set $H = TB \oplus T^*B$
and define a symmetric bilinear form by
\[
\langle\!\langle X \oplus \alpha,\; Y \oplus \beta \rangle\!\rangle
= \alpha(Y) + \beta(X).
\]
Then
\[
\langle\!\langle H^+, H^+ \rangle\!\rangle = 0,
\qquad
\langle\!\langle H^-, H^- \rangle\!\rangle = 0,
\]
and for $X \in H^+$, $\alpha \in H^-$,
\[
\langle\!\langle X, \alpha \rangle\!\rangle = \alpha(X).
\]

Take $\nabla^+$ to be the given flat connection on $TB \cong H^+$ and $\nabla^-$ the dual flat connection on $T^*B \cong H^-$. Define
\[
\mathscr{L}_{\Psi} : H^+ \to H^-,
\qquad
\mathscr{L}_{\Psi}(Y) = g(Y,\cdot).
\]
In $\nabla^+$-flat coordinates $(\theta^i)$, this gives
\[
\mathscr{L}_{\Psi}(\partial_j) = (\partial_i \partial_j \Psi)\, d\theta^i.
\]
All axioms of Definition~\ref{D:Legendre_Bundle} are satisfied, hence $(H,\langle\!\langle \cdot,\cdot \rangle\!\rangle,\nabla^+,\nabla^-,\Psi)$ is a Legendre bundle.

\medskip

2. Given a Legendre bundle $(H,\langle\!\langle \cdot,\cdot \rangle\!\rangle,\nabla^+,\nabla^-,\Psi)$, use the isomorphism $H^+ \cong TB$ to transfer $\nabla^+$ to a flat connection on $TB$. Define a bilinear form on $TB$ by
\[
g(X,Y) = \langle\!\langle X, \mathscr{L}_{\Psi}(Y) \rangle\!\rangle.
\]

In $\nabla^+$-flat coordinates $(\theta^i)$, $H^+$ is spanned by $\{\partial_i\}$ and $H^-$ by $\{d\theta^i\}$, with
\[
\langle\!\langle \partial_i, d\theta^j \rangle\!\rangle = \delta_i^j.
\]
By definition of $\mathscr{L}_{\Psi}$,
\[
\mathscr{L}_{\Psi}(\partial_j) = (\partial_i \partial_j \Psi)\, d\theta^i.
\]
Hence
\[
g(\partial_i, \partial_j)
= \langle\!\langle \partial_i, \mathscr{L}_{\Psi}(\partial_j) \rangle\!\rangle
= \langle\!\langle \partial_i, (\partial_k \partial_j \Psi)\, d\theta^k \rangle\!\rangle
= (\partial_k \partial_j \Psi)\, \delta_i^k
= \partial_i \partial_j \Psi.
\]

Thus $g = D^2 \Psi$ is a Hessian metric. Since $\nabla^+$ is flat and $\nabla^-$ satisfies the duality condition with respect to $\langle\!\langle \cdot,\cdot \rangle\!\rangle$, it follows that $\nabla^-$ is the connection dual to $\nabla^+$ with respect to $g$. Therefore $(B,g,\nabla^+,\nabla^-,\Psi)$ is a dually flat manifold.

\end{proof}

\subsection{Examples of Classical Legendre bundles}

\subsubsection{Exponential Families} Let $(\Omega, \Sigma)$ be a measurable space equipped with a family of probability measures $\mathfrak{P} = \{P_\theta\}_{\theta \in \Theta}$
dominated by a $\sigma$-finite (positive) measure $\mu$. Hence each $P_\theta$ is absolutely continuous with respect to $\mu$ and admits a (positive) density
$\rho_\theta = \frac{dP_\theta}{d\mu}\geq 0,$ such that $\int_{\Omega} \rho_\theta(\omega)\, \mu(d\omega)=1.$
The set $\{\rho_{\theta} \in L^1_+(\mu): \, \theta\in \Theta\}$ forms a subset of the positive cone of $L^1(\mu)$.

Recall that a statistic is a measurable map $T:({\Omega},\Sigma)\to ({\Upsilon},\Sigma')$, where $({\Upsilon},\Sigma')$ is a measurable space. 
A sub-$\sigma$-algebra $\mathscr{G} \subset \Sigma$ is said to be \emph{sufficient} for the family $\mathfrak{P}$ if for every set $A \in \Sigma$ there exists a $\mathscr{G}$-measurable function $f_A : {\Omega} \to [0,1]$ such that for every $\theta \in \Theta$ and every $G \in \mathscr{G}$,
\[
\int_G f_A \, dP_\theta = P_\theta(A \cap G).
\]
In other words, $f_A$ is a version of the conditional probability $P_\theta(A \mid \mathscr{G})$ that does not depend on $\theta$.

A statistic $T : ({\Omega}, \Sigma) \to ({\Upsilon}, \Sigma')$ is sufficient for the family $\mathfrak{P}$ if the $\sigma$-algebra $\sigma(T) = \{T^{-1}(B) : B \in \Sigma'\}$ is sufficient for $\mathfrak{P}$.

Equivalently, for every $A \in \Sigma$ there exists a $\sigma(T)$-measurable function $\varphi_A : \Omega \to [0,1]$ such that for every $\theta \in \Theta$ and every $B \in \Sigma'$,
\[
\int_{T^{-1}(B)} \varphi_A \, dP_\theta 
= P_\theta\bigl(A \cap T^{-1}(B)\bigr).
\]
The function $\varphi_A$ is then called a version of the conditional probability of $A$ given $T$ that is independent of $\theta$.

Let $T$ be the sufficient statistic, taking values in $\mathbb{R}^n$ i.e. $T\colon {\Omega}\to \mathbb{R}^n$.
We assume that we have an exponential family with respect to a measure $\mu$ and no carrier term, which implies that the probability density $\rho_{\theta}(\omega)$ can be expressed in a canonical form as follows: 
\[
\rho_{\theta}(\omega) = \exp\bigl(\langle \theta, T(\omega) \rangle - \Psi(\theta)\bigr), 
\qquad \theta \in \Theta,
\]
where the parameter space
\[
\Theta := \left\{ \theta \in \mathbb{R}^n \;\middle|\; 
\Psi(\theta) := \log \int_{\Omega} \exp\bigl(\langle \theta, T(\omega) \rangle\bigr)\, \mu(d\omega) < \infty \right\}
\]
is an open convex subset of $E = \mathbb{R}^n$, and $\Psi$ is the log-partition function. 
We now exhibit the Legendre duality structure (and thus the structure of a Legendre bundle). The function $\Psi$ is smooth and strictly convex on $\Theta$, hence $(\Theta, \Psi) \in \mathbf{CPS}$. The Hessian metric
\[
g_{ij}(\theta) = \partial_i \partial_j \Psi(\theta)
\]
is the Fisher--Rao metric. The dual coordinates are defined by
\[
\eta_i := \partial_i \Psi(\theta).
\]
Provided differentiation under the integral sign is justified, these admit the representation
\[
\eta_i = \mathbb{E}_\theta[T_i],
\qquad
\mathbb{E}_\theta[T_i]
= \int_{\Omega} T_i(\omega)\, \rho_{\theta}(\omega)\, \mu(d\omega),
\]
where $\mathbb{E}_\theta$ denotes expectation with respect to $P_\theta$.

The Legendre conjugate $\Psi^*(\eta)=\sup\limits_\theta\{\, \langle \theta,\eta\rangle -\Psi(\theta)\, \}$ corresponds (up to sign conventions) to the entropy, and the Fenchel--Young identity reads
\[
\Psi(\theta) + \Psi^*(\eta) = \langle \theta, \eta \rangle.
\]

\begin{prop}
Every exponential family carries a natural Legendre bundle structure, with
\[
H^+ = T\Theta, \qquad H^- = T^*\Theta,
\]
flat connections $\nabla^+$ and $\nabla^-$ induced by the affine structures on $\Theta$ and $\Theta^*$ respectively, and potential $\Psi$ given by the log-partition function.
\end{prop}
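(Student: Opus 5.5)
The plan is to reduce the statement to Proposition~\ref{P:2.4}: I will show that $(\Theta, g, \nabla^+, \nabla^-)$ with $g = D^2\Psi$ is a dually flat manifold with a strictly convex potential. The equivalence then directly yields the Legendre bundle $H = T\Theta \oplus T^*\Theta$ with its natural pairing $\langle\!\langle X\oplus\alpha, Y\oplus\beta\rangle\!\rangle = \alpha(Y)+\beta(X)$.

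The first step is analytic and establishes that $(\Theta,\Psi)\in \rm Ob(\mathbf{CPS})$.
\begin{enumerate}
\item Convexity of $\Theta$ and of $\Psi$ follows from H\"older's inequality applied to $\int \exp(\langle t\theta + (1-t)\theta', T\rangle)\,d\mu$.
\item Smoothness on the (assumed open) set $\Theta$ follows from the standard dominated-convergence argument for Laplace transforms. Near any interior point, $|T|^k e^{\langle\theta,T\rangle}$ is dominated by a finite sum of exponentials $e^{\langle\theta\pm\varepsilon e_i,T\rangle}$. This justifies differentiating under the integral sign to all orders and gives $\partial_i\Psi = \mathbb{E}_\theta[T_i]$ and $\partial_i\partial_j\Psi = \mathrm{Cov}_\theta(T_i,T_j)$.
\end{enumerate}

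The main obstacle is \emph{strict} convexity, i.e.\ positive definiteness of the covariance matrix. For a vector $v$, the quadratic form $v^\top (D^2\Psi) v = \mathrm{Var}_\theta(\langle v,T\rangle)$ vanishes iff $\langle v,T\rangle$ is $P_\theta$-a.s.\ constant. This fails exactly when the representation is not minimal. I will therefore make explicit the standing assumption implicit in the text, namely that the family is minimal (no nontrivial affine relation $\langle v, T\rangle = c$ holds $\mu$-a.e.). This can always be arranged by reparametrising on the affine hull of the support of $T_*\mu$. Under it, $g$ is positive definite: it is the Fisher--Rao metric.

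Next come the geometric identifications.
\begin{enumerate}
\item $\nabla^+$ is the flat connection with $\nabla^+\partial_i=0$ in the linear coordinates $\theta^i$.
\item Because $\Psi$ is strictly convex, the Legendre map $\theta\mapsto\eta=\nabla\Psi(\theta)$ is a local diffeomorphism. It is injective by strict monotonicity of the gradient, $\langle \nabla\Psi(\theta)-\nabla\Psi(\theta'),\theta-\theta'\rangle>0$, hence a diffeomorphism onto the open set $\Theta^*$.
\item $\nabla^-$ is defined as the flat connection for which the $\eta$-coordinate fields $\partial/\partial\eta_i = g^{ij}\partial_j$ are parallel. Equivalently, on $T^*\Theta$ the forms $d\eta_i = g_{ij}d\theta^j$ are parallel.
\end{enumerate}

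Finally I verify the duality condition on basis elements. We have $\langle\!\langle \partial_i, d\eta_j\rangle\!\rangle = g_{ij}$, with $\nabla^+\partial_i = 0$ and $\nabla^- d\eta_j=0$. This mismatch is resolved by checking the condition with $\nabla^-$ taken as the connection dual to $\nabla^+$ via the natural pairing, so that $d\theta^i$ are $\nabla^-$-parallel as forms. The metric-dual connection on vectors, transported through $\mathscr{L}_\Psi$, then makes $\mathscr{L}_\Psi(\partial/\partial\eta_i)=d\eta_i$ parallel, consistent with the Hessian identity $\partial_k g_{ij}=\partial_i\partial_j\partial_k\Psi$ being totally symmetric. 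With this, the relation $Z\,g(X,Y)=g(\nabla^+_ZX,Y)+g(X,\nabla^-_ZY)$ reduces on coordinate fields to $\partial_k\delta_i^j = 0$. The Fenchel--Young identity with $\Psi^*$ (the negative entropy up to sign) supplies the dual potential. Proposition~\ref{P:2.4} then gives the Legendre bundle, with $\mathscr{L}_\Psi(\partial_j)=(\partial_i\partial_j\Psi)\,d\theta^i$ being the Fisher information map.
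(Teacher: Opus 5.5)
Your route is the paper's: show that $(\Theta,\Psi)$ with $g=D^2\Psi$ is a dually flat (Hessian) structure, then invoke Proposition~\ref{P:2.4}. Your analytic step is more careful than the paper's. The minimality hypothesis you make explicit is genuinely needed, since the paper simply asserts strict convexity of $\Psi$ for every exponential family, and this fails when $\langle v,T\rangle$ is $\mu$-a.e.\ constant for some $v\neq 0$.

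There is, however, a wrong formula in your last paragraph, and it sits exactly where the Legendre-bundle duality axiom is checked. Since $\mathscr{L}_\Psi(\partial_j)=g_{ij}\,d\theta^i=d\eta_j$, you get $\mathscr{L}_\Psi(\partial/\partial\eta_i)=g^{ij}\,d\eta_j=d\theta^i$, not $d\eta_i$.

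This matters because the two coframes cannot both be parallel for one connection unless $\partial_i\partial_j\partial_k\Psi\equiv 0$. The connection of your item 3, with $d\eta_j$ parallel, actually violates the axiom: $\partial_k\langle\!\langle\partial_i,d\eta_j\rangle\!\rangle=\partial_i\partial_j\partial_k\Psi$, while the right-hand side is $0$. For the Poisson family, $\Psi=e^\theta$, this is $e^\theta\neq 0$.

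The fix is as follows.
\begin{itemize}
\item Delete the ``equivalently'' in item 3.
\item Let $\nabla^{*}$ be the $g$-dual connection on $T\Theta$, i.e.\ the one with $\partial/\partial\eta_i$ parallel.
\item Define $\nabla^-$ on $H^-=T^*\Theta$ by $\nabla^-_Z(\mathscr{L}_\Psi Y):=\mathscr{L}_\Psi(\nabla^{*}_ZY)$.
\end{itemize}
Setting $\alpha=\mathscr{L}_\Psi Y$ in $Z\,g(X,Y)=g(\nabla^+_ZX,Y)+g(X,\nabla^{*}_ZY)$ gives $Z\,\alpha(X)=\alpha(\nabla^+_ZX)+(\nabla^-_Z\alpha)(X)$. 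So this $\nabla^-$ is precisely the connection dual to $\nabla^+$ under the natural pairing, with $d\theta^i=\mathscr{L}_\Psi(\partial/\partial\eta_i)$ parallel.

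That is the reading under which ``induced by the affine structure on $\Theta^*$'' is compatible with Definition~\ref{D:Legendre_Bundle}; the paper's proof leaves this identification implicit. With this correction, your argument is complete.
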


\begin{proof}
For an exponential family, the log‑partition function $\Psi$ is strictly convex on the open convex set $\Theta\subset \mathbb{R}^n$. The Hessian metric $g = D^2 \Psi$ defines a Riemannian metric and the standard affine coordinates $\theta^i$ give a flat connection $\nabla^+$ on $T\Theta$.
The dual coordinates $\eta_i=\partial\Psi$ define a flat connection $\nabla^-$ on $T^*\Theta$. This dual connection is uniquely determined by the duality condition $
(Z \cdot g)(X,Y) = g(\nabla^+_Z X, Y) + g(X, \nabla^-_Z Y)$ with respect to the Hessian metric $g$.

 Now construct the Legendre bundle by taking $H = T\Theta \oplus T^*\Theta$
equipped with the canonical pairing
\[
\langle\!\langle X \oplus \alpha, Y \oplus \beta \rangle\!\rangle = \alpha(Y) + \beta(X).
\]

By taking $H^+ = T\Theta, \quad H^- = T^*\Theta,$ it follows that the connections $\nabla^+$ and $\nabla^-$ are exactly the flat connections described above, and the potential is $\Psi$. The Legendre morphism is the bundle map
\[
\mathscr{L}_\Psi : H^+ \to H^-, \qquad \mathscr{L}_\Psi(\partial_i) = (\partial_i \partial_j \Psi)\, d\theta^j
\]
in $\theta$-coordinates, extended linearly.  

By Proposition~\ref{P:2.4}, these data satisfy all the axioms of a Legendre bundle.\end{proof}

\subsection{Para-K\"ahler structures on Legendre bundles}

\begin{dfn}
    Let $B$ be a smooth manifold. A \emph{para-Kähler vector bundle} over $B$ is a quadruple $(H, J, \boldsymbol{\omega}^S, \nabla)$ where:

\begin{itemize}
    \item $H$ is a real vector bundle over $B$.
    \item $J$ is a vector bundle endomorphism of $H$ such that
    \[
    J^2 = \mathrm{id}_H,
    \]
    and the subbundles
    \[
    H^+ = \ker(J - \mathrm{id}_H), \qquad H^- = \ker(J + \mathrm{id}_H)
    \]
    have equal rank.
    \item $\boldsymbol{\omega}^S$ is a non-degenerate skew-symmetric bilinear form on the fibres of $H$ (i.e., a symplectic form on each fibre) satisfying
    \[
    \boldsymbol{\omega}^S(JX, JY) = -\,\boldsymbol{\omega}^S(X, Y) \quad (\forall X, Y \in H).
    \]
    \item $\nabla$ is a flat connection on $H$ (i.e., its curvature vanishes) such that
    \[
    \nabla J = 0 \quad \text{and} \quad \nabla \boldsymbol{\omega}^S = 0.
    \]
\end{itemize}

The bundle is said to be para-Kähler because the pair $(J, \boldsymbol{\omega}^S)$ defines a para-complex structure on each fibre that is compatible with the symplectic form, and the connection is flat and preserves both structures.
\end{dfn}

The classical Legendre bundle carries a natural paracomplex structure: it carries a bundle endomorphism $J : H \to H$
such that $J^2 = \mathrm{id}$. It is defined by \begin{equation}\label{E:para} J|_{H^+} = +\mathrm{id}, \qquad J|_{H^-} = -\mathrm{id},\end{equation}
so that the decomposition $H = H^+ \oplus H^-$ is exactly the eigenspace decomposition of $J$.  

This endows $H$ with the structure of a \emph{para-Kähler vector bundle}~\cite{CM} when combined with the canonical pairing $\langle\!\langle \cdot, \cdot \rangle\!\rangle$: the bilinear form
\begin{equation}\label{E:symplectic}
    \boldsymbol{\omega}^S(X,Y) = \langle\!\langle JX, Y \rangle\!\rangle
\end{equation}
is a symplectic form on each fibre, and $J$ is parallel with respect to the flat connections $\nabla^+$ and $\nabla^-$, in the sense that the induced connection on $H$ preserves the splitting $H = H^+ \oplus H^-$.

\begin{prop}\label{P:ParaKahler}
The (Legendre) bundle $(H,J,\boldsymbol{\omega}^S)$ is a para‑Kähler vector bundle over $B$. 
\end{prop}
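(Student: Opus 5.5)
We verify the four items in the definition of a para-K\"ahler vector bundle for the quadruple $(H,J,\boldsymbol{\omega}^S,\nabla)$, with $J$ as in \eqref{E:para}, $\boldsymbol{\omega}^S$ as in \eqref{E:symplectic}, and $\nabla:=\nabla^+\oplus\nabla^-$ the direct sum connection on $H=H^+\oplus H^-$. The checks are pointwise linear algebra in a local frame $\{\partial_i\}\cup\{d\theta^i\}$ adapted to $\nabla^+$-flat coordinates, together with the flatness and duality axioms of Definition~\ref{D:Legendre_Bundle}.

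\emph{Para-complex structure.} By construction $J^2=\mathrm{id}_H$. Its $\pm1$ eigenbundles are exactly $H^\pm$. Since $H^+\cong TB$ and $H^-\cong T^*B$, both have rank $\dim B$, so the ranks agree.

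\emph{Symplectic form.} Write $X=X^++X^-$ and $Y=Y^++Y^-$. Because $H^\pm$ are isotropic for $\langle\!\langle\cdot,\cdot\rangle\!\rangle$, we get
\[
\boldsymbol{\omega}^S(X,Y)=\langle\!\langle X^+-X^-,Y^++Y^-\rangle\!\rangle=\langle\!\langle X^+,Y^-\rangle\!\rangle-\langle\!\langle X^-,Y^+\rangle\!\rangle .
\]
Symmetry of the pairing then shows this is skew: swapping $X$ and $Y$ changes the sign. For non-degeneracy, in the frame $\{\partial_i,d\theta^i\}$ the matrix of $\boldsymbol{\omega}^S$ is $\begin{pmatrix}0&I\\-I&0\end{pmatrix}$, using $\langle\!\langle\partial_i,d\theta^j\rangle\!\rangle=\delta_i^j$. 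For compatibility, note that $J$ preserves $\langle\!\langle\cdot,\cdot\rangle\!\rangle$ up to sign: $\langle\!\langle JX,JY\rangle\!\rangle=-\langle\!\langle X,Y\rangle\!\rangle$, because only mixed terms survive and each picks up a factor $(+1)(-1)$. Hence
\[
\boldsymbol{\omega}^S(JX,JY)=\langle\!\langle J^2X,JY\rangle\!\rangle=\langle\!\langle X,JY\rangle\!\rangle=-\langle\!\langle JX,Y\rangle\!\rangle=-\boldsymbol{\omega}^S(X,Y).
\]
The third equality is the anti-invariance above applied to the pair $(JX,Y)$, and can also be checked directly.

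\emph{Connection.} The curvature of $\nabla=\nabla^+\oplus\nabla^-$ is the direct sum of the curvatures of $\nabla^+$ and $\nabla^-$, so $\nabla$ is flat. It preserves the splitting, hence commutes with $J$, so $\nabla J=0$. For $\nabla\boldsymbol{\omega}^S=0$ we compute $Z\,\boldsymbol{\omega}^S(X,Y)$ using the decomposition above. The term $Z\langle\!\langle X^+,Y^-\rangle\!\rangle$ equals $\langle\!\langle\nabla^+_ZX^+,Y^-\rangle\!\rangle+\langle\!\langle X^+,\nabla^-_ZY^-\rangle\!\rangle$ by the duality axiom (3), and the same holds for the term with $X^-,Y^+$. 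Summing gives $\boldsymbol{\omega}^S(\nabla_ZX,Y)+\boldsymbol{\omega}^S(X,\nabla_ZY)$. Equivalently, $\nabla\langle\!\langle\cdot,\cdot\rangle\!\rangle=0$ by duality, and combined with $\nabla J=0$ this yields $\nabla\boldsymbol{\omega}^S=0$.

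The main point needing care is this last step. The duality axiom is stated only for mixed pairs $(X,\alpha)\in H^+\times H^-$. We must observe that the pure pairings $\langle\!\langle H^\pm,H^\pm\rangle\!\rangle$ vanish identically and $\nabla$ preserves $H^\pm$, so those components contribute trivially. The mixed axiom therefore suffices to make the full pairing, and hence $\boldsymbol{\omega}^S$, $\nabla$-parallel. The potential $\Psi$ plays no role in the argument.
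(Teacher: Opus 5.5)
Your proposal is correct and follows essentially the same route as the paper: decompose into $H^\pm$ components to get $\boldsymbol{\omega}^S(X,Y)=\langle\!\langle X^+,Y^-\rangle\!\rangle-\langle\!\langle X^-,Y^+\rangle\!\rangle$, take $\nabla=\nabla^+\oplus\nabla^-$, and use preservation of the splitting together with the duality axiom to obtain $\nabla J=0$ and $\nabla\boldsymbol{\omega}^S=0$. You additionally check non-degeneracy and the compatibility $\boldsymbol{\omega}^S(JX,JY)=-\boldsymbol{\omega}^S(X,Y)$, which the paper's proof leaves implicit.
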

    \begin{proof}
Given the decomposition $H = H^+ \oplus H^-$, define $X = X^+ \oplus \alpha$ and $Y = Y^+ \oplus \beta,$ where $X^+, Y^+ \in \Gamma(H^+)$ and $\alpha, \beta \in \Gamma(H^-)$. Then,
by definition of $J$ we obtain \[
JX = X^+ \oplus (-\alpha).
\]
Using the pairing
\[
\langle\!\langle X^+, \beta \rangle\!\rangle = \beta(X^+)
\quad \text{and} \quad
\langle\!\langle \alpha, Y^+ \rangle\!\rangle = \alpha(Y^+),
\]
we define  $$\boldsymbol{\omega}^S(X,Y)=\langle\!\langle 
 J(X \oplus \alpha),\, Y \oplus \beta \rangle\!\rangle
= \langle\!\langle X^+ \oplus (-\alpha),\, Y^+ \oplus \beta \rangle\!\rangle$$ \[
= \langle\!\langle X^+, Y^+ \rangle\!\rangle
+ \langle\!\langle X^+, \beta \rangle\!\rangle
+ \langle\!\langle -\alpha, Y^+ \rangle\!\rangle
+ \langle\!\langle -\alpha, \beta \rangle\!\rangle
\]
\[
= 0 + \beta(X^+) + \bigl(-\alpha(Y^+)\bigr) + 0
= \beta(X^+) - \alpha(Y^+).
\] which is the canonical symplectic form on $T^*B$ (when identifying $H^+ \cong TB$, $H^- \cong T^*B$).

The natural connection on $H$ is the direct sum $\nabla = \nabla^+ \oplus \nabla^-$, defined by
\[
\nabla(X^+ \oplus \alpha) = (\nabla^+ X^+) \oplus (\nabla^- \alpha).
\]
This connection preserves the splitting, hence it commutes with $J$ (since $J$ is constant on each factor). Moreover, because the pairing $\langle\!\langle \cdot, \cdot \rangle\!\rangle$ is flat with respect to $\nabla^+ \oplus \nabla^-$ (by the duality condition), the symplectic form $\boldsymbol{\omega}^S$ is also flat. Thus $(H, J, \boldsymbol{\omega}^S, \nabla)$ is a flat para-Kähler vector bundle.

    \end{proof}

\section{Hessian Quantum Field Theory}
In what follows we introduce a definition for specific classes of QFTs. We name them Hessian QFTs, after their geometric properties. 

Let $k$ be a field of characteristic zero ($\mathbb{R}$ or $\mathbb{C}$) and set $D = \operatorname{Spec} k[[u]]$, the formal disk with coordinate $u$. 

\smallskip 

All geometric objects considered below are defined over the formal power series ring $k[[u]]$; this means that their local descriptions involve formal power series in $u$ with coefficients in smooth functions on the base manifold. This is precisely the unifying bridge: the same geometric object (a family of Legendre bundles) describes both statistical models (at $u=0$) and QFTs (with $u$ as a formal parameter). The formal approach avoids convergence issues.

\smallskip 

This  draws inspiration from the framework of $F$-bundles defined in \cite{K,KKP}.

\begin{dfn}[Hessian QFT]\label{D:gQFT}
A Hessian quantum field theory is a quantum field theory satisfying the following conditions:
\begin{enumerate}
    \item Its coupling space $\cT$ is an open convex subset of $\mathbb{R}^n$ (or more generally a smooth manifold with global affine coordinates).
    \item The free energy (log-partition function) $F(t,u)$, is a formal power series in a deformation parameter $u$ (often identified with $\hbar$ or the genus expansion) with coefficients in $C^\infty(\cT)$:
    \[
        F(t,u) = F_0(t) + u F_1(t) + u^2 F_2(t) + \cdots
    \]
     and is strictly convex for each fixed $u$ in the formal sense.

    \item The Hessian
\[
    g_{ij}(t,u) = \partial_i \partial_j F(t,u)
\]
defines a positive-definite metric (the \emph{Zamolodchikov metric} in the conformal case) which, together with the affine structure on $\cT$ and its dual, endows $\cT$ with the structure of a dually flat (Hessian) manifold for each fixed $u$.
\end{enumerate}
\end{dfn}

In the case $u=0$, the definition recovers the classical dually flat structure of exponential families in information geometry, with $\cT$ as the parameter space.

\subsection{Restricted Class and Formal Setup}\label{sec:restricted_class}
The coupling space $\mathcal{T}$ carries a natural notion of distinguishability between states. Depending on the context—probability distributions, density matrices, or quantum fields—this is measured respectively by the Fisher, Bures, or Zamolodchikov metric. These metrics arise from a single geometric object, the Provost--Vall\'ee \emph{quantum geometric tensor} (QGT). For a smooth family of normalized quantum states $|\psi(\theta)\rangle$, the QGT is
\[
\mathscr{Q}_{ij}(\theta) 
= \langle \partial_i \psi \mid \partial_j \psi \rangle 
- \langle \partial_i \psi \mid \psi \rangle \, \langle \psi \mid \partial_j \psi \rangle.
\]
This reduces to the Fisher metric for classical probability distributions and to the Zamolodchikov metric in conformal field theory, while its imaginary part gives the Berry curvature. In the setting of a Hessian QFT, the metric $g_{ij}=\partial_i\partial_jF(t,u)$ coincides with the real part of the QGT, thereby providing a unifying geometric interpretation.

Instances of theories that can be realised as Hessian QFTs include the following:
\begin{itemize}
    \item \textbf{Topological field theories (TFTs)} whose free energy $F(t,u)$ is a formal power series with a positive‑definite Hessian at the point of expansion (e.g., free TFTs or Gaussian theories). In many TFTs the free energy is polynomial; strict convexity is not automatic, but the definition only requires convexity in the formal sense, which can be arranged by considering a sufficiently small neighbourhood of a point where the Hessian is positive definite.
    \item \textbf{Conformal field theories (CFTs) with exactly marginal deformations}. For such theories the space of couplings is a Frobenius manifold (see \cite{D} 1996; \cite{CV} 1993), which is a dually flat (Hessian) manifold. In particular, the free energy $F(t,u)$ is a formal power series that is strictly convex in the formal sense, and the metric coincides with the Zamolodchikov metric. Hence these CFTs are Hessian QFTs.
    \item \textbf{Zero‑dimensional QFT} ($M = \{\mathrm{pt}\}$), which recovers the exponential families of information geometry. Here $u$ is absent (or $u=0$) and the free energy is the cumulant generating function, which is strictly convex by definition.
\end{itemize}

\subsection{Families of Legendre Bundles}\label{sec:families}

We introduce now the family of Legendre bundles over $B\times D$ where $D = \operatorname{Spec} k[[u]]$ is the formal disk with coordinate $u$, where $k$ is either \(\mathbb{R}\) or \(\mathbb{C}\). Notice that all objects are smooth in the base directions and formal in \(u\). This construction introduces a deformation parameter $u$. At $u=0$ we recover the classical structure (the classical Legendre bundle from definition~\ref{D:Legendre_Bundle}). Higher‑order terms in $u$ encode quantum corrections.

This formalism allows us to treat certain QFTs (those whose coupling space admits a formal convex potential) as {\it quantum deformations} of a classical dually flat manifold, i.e., as a {\it family of Legendre bundles}.

\subsection{Family of Legendre Bundles}\label{S:F-Legendrebundle}
\begin{dfn}[Family of Legendre bundles]\label{D:FLB}
A \emph{family of Legendre bundles} over $B\times D$ is a quintuple
\[
\bigl( H,\; \langle\!\langle\cdot,\cdot\rangle\!\rangle,\; \nabla^+,\; \nabla^-,\; \Psi \bigr)
\]
where
\begin{itemize}
\item $H = H^+ \oplus H^-$ is a vector bundle over $B\times D$ with isomorphisms 
  $H^+ \cong \pi_B^*TB$ and $H^- \cong \pi_B^*T^*B$ that are independent of $u$;
\item $\langle\!\langle\cdot,\cdot\rangle\!\rangle$ is a non‑degenerate symmetric bilinear form on $H$ such that 
  $\langle\!\langle H^+,H^+\rangle\!\rangle = 0$, $\langle\!\langle H^-,H^-\rangle\!\rangle = 0$,
  and the induced pairing $H^+\otimes H^-\to \mathcal{O}_{B\times D}$ is the canonical evaluation;
\item $\nabla^+$ and $\nabla^-$ are flat connections on $H^+$ and $H^-$ respectively, 
  satisfying the duality condition
  \[
  Z\langle\!\langle X,\alpha\rangle\!\rangle = \langle\!\langle \nabla^+_Z X,\alpha\rangle\!\rangle + \langle\!\langle X,\nabla^-_Z\alpha\rangle\!\rangle
  \]
  for all sections $X$ of $H^+$, $\alpha$ of $H^-$ and all vector fields $Z$ on $B\times D$ (in particular for $\partial_u$);
\item $\Psi$ is a formal power series in $u$ with coefficients in $C^\infty(B)$,
  \[
  \Psi(\cdot,u) = \psi(\cdot) + u\psi_1(\cdot) + u^2\psi_2(\cdot) + \cdots,
  \]
  such that for each fixed $u$ (in the formal sense) the Hessian $\partial_i\partial_j\Psi$ defines a positive‑definite metric on $B$ (such that the Hessian of the constant term $\psi$ is positive‑definite; the higher‑order terms $\psi_1,\psi_2,\dots$ are arbitrary formal power series).
\end{itemize}
The data are required to satisfy the conditions of Definition~\ref{D:Legendre_Bundle} (the classical Legendre bundle) for each fixed $u$.
\end{dfn}

\begin{remark}

At $u=0$ the family reduces to the classical Legendre bundle $\bigl( H_0,\langle\!\langle\cdot,\cdot\rangle\!\rangle_0,\nabla^+_0,\nabla^-_0,\psi \bigr)$ over $B$, where $H_0 = H|_{u=0}$ and $\psi$ is the original convex potential. The higher‑order terms $\psi_1,\psi_2,\dots$ encode deformations of the classical dually flat structure.
\end{remark}

\smallskip

We proceed to considerations linking the para-Kähler structure and the family of Legendre bundles. 

\smallskip 

\begin{prop}\label{P:FamilyParaKahler}[Deformation of the para-Kähler structure]~

    Let $\bigl( H,\; \langle\!\langle\cdot,\cdot\rangle\!\rangle,\; \nabla^+,\; \nabla^-,\; \Psi \bigr)$ be a family of Legendre bundles over $B\times D$ as in Definition~\ref{D:FLB}. Then for each fixed $u$ (in the formal sense), the restriction $H_u = H|_{B\times\{u\}}$ together with the induced data $(J,\boldsymbol{\omega}^S_u,\nabla_u)$ defined by  

\begin{itemize}
    \item $J = +\mathrm{id}$ on $H^+_u$ and $J = -\mathrm{id}$ on $H^-_u$;  
    \item $\boldsymbol{\omega}^S_u(X,Y) = \langle\!\langle JX, Y\rangle\!\rangle$;  
    \item $\nabla_u = \nabla^+_u \oplus \nabla^-_u$
\end{itemize}  

is a \emph{para-Kähler vector bundle} over $B$.  

In particular, the family $(H,\langle\!\langle\cdot,\cdot\rangle\!\rangle,\nabla^+,\nabla^-,\Psi)$ is a \emph{formal deformation} of the para-Kähler bundle at $u=0$ (the classical Legendre bundle) with deformation parameter $u$.  
\end{prop}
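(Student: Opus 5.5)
The plan is to reduce the statement fibrewise to Proposition~\ref{P:ParaKahler}. By Definition~\ref{D:FLB}, the data $(H_u,\langle\!\langle\cdot,\cdot\rangle\!\rangle_u,\nabla^+_u,\nabla^-_u,\Psi(\cdot,u))$ satisfy the axioms of Definition~\ref{D:Legendre_Bundle} for each fixed $u$, with coefficients read in $k[[u]]$ and smooth in $B$. So I will first make precise what ``fixed $u$ in the formal sense'' means. I would treat $H_u$ as a bundle of free modules over $C^\infty(B)[[u]]$, or equivalently work modulo $u^{N+1}$ for every $N$ and pass to the inverse limit. I then check that every identity used in the proof of Proposition~\ref{P:ParaKahler} is a polynomial identity in the structure coefficients, so it holds over $C^\infty(B)[[u]]$ whenever it holds pointwise.

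The fibrewise verification follows the proof of Proposition~\ref{P:ParaKahler} step by step.
\begin{enumerate}
\item $J^2=\mathrm{id}$ by construction, and the eigenbundles $H_u^\pm\cong\pi_B^*TB,\pi_B^*T^*B$ have equal rank $n=\dim B$. Since these isomorphisms are independent of $u$, $J$ itself is $u$-independent.
\item The computation $\boldsymbol{\omega}^S_u(X^+\oplus\alpha,Y^+\oplus\beta)=\beta(X^+)-\alpha(Y^+)$ uses only the isotropy of $H^\pm$ and the canonical evaluation pairing. It therefore holds verbatim, giving skew-symmetry and non-degeneracy; in fact $\boldsymbol{\omega}^S_u$ does not depend on $u$.
\item $\boldsymbol{\omega}^S_u(JX,JY)=\langle\!\langle J^2X,JY\rangle\!\rangle=\langle\!\langle X,JY\rangle\!\rangle$. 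Using the explicit formula, this equals $-\boldsymbol{\omega}^S_u(X,Y)$.
\end{enumerate}

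Next comes the connection. The curvature of $\nabla_u=\nabla^+_u\oplus\nabla^-_u$ is the direct sum of the curvatures, so it is flat. It preserves the splitting, hence $\nabla_uJ=0$. For $\nabla_u\boldsymbol{\omega}^S_u=0$, I expand $Z\,\boldsymbol{\omega}^S_u(X,Y)$ into the two terms $Z\beta(X^+)$ and $-Z\alpha(Y^+)$ and apply the duality condition of Definition~\ref{D:FLB} to each. These are $B$-direction identities holding for each $u$.

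The main obstacle I expect is the interaction with the $u$-direction. The connections are defined on $B\times D$, and restricting them to $B\times\{u\}$ must give honest connections on $H_u$ over $C^\infty(B)[[u]]$. That is, the restriction must be $k[[u]]$-linear and retain flatness, which is not automatic from a single fibre value $u=0$. I would argue as follows: the curvature of $\nabla^\pm$ on $B\times D$ vanishes, so in particular its $B$-$B$ components vanish. Those components are exactly the curvature of the restriction, coefficient by coefficient in $u$.

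For the ``in particular'' clause, I set $u=0$. Then $\Psi\mapsto\psi$, and the data become the classical Legendre bundle of the Remark following Definition~\ref{D:FLB}, whose para-Kähler structure is that of Proposition~\ref{P:ParaKahler}. The family over $D$ therefore restricts to it modulo $u$, which is the definition of a formal deformation. Note that $J$ and $\boldsymbol{\omega}^S$ are constant in $u$, so the deformation is carried entirely by $\nabla_u$ and the Hessian metric $\partial_i\partial_j\Psi$.
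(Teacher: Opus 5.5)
Your proposal is correct and follows essentially the same route as the paper. Both reduce fibrewise to Proposition~\ref{P:ParaKahler}, using the requirement in Definition~\ref{D:FLB} that each fixed-$u$ slice is a classical Legendre bundle, and both read off the $u=0$ specialisation for the formal-deformation clause. Your extra care fills in details the paper leaves implicit: working over $C^\infty(B)[[u]]$, verifying $\boldsymbol{\omega}^S_u(JX,JY)=-\boldsymbol{\omega}^S_u(X,Y)$ and $\nabla_u\boldsymbol{\omega}^S_u=0$ explicitly via the duality condition, and extracting flatness of the restricted connections from the $B$--$B$ curvature components. The flatness point, however, is already guaranteed by the last clause of Definition~\ref{D:FLB}, so that step is not strictly needed.
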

   
\begin{proof}
    
For each fixed $u$, the data satisfy the classical Legendre bundle conditions (by definition of a family). As shown in Proposition~\ref{P:ParaKahler} (the para-Kähler structure of the classical Legendre bundle), $(H_u,J,\boldsymbol{\omega}^S_u,\nabla_u)$ fulfills all axioms of a para-Kähler vector bundle:  
\begin{itemize}
    \item $J^2 = \mathrm{id}$ and the eigenbundles $H^\pm_u$ have equal rank;  
    \item $\boldsymbol{\omega}^S_u$ is non-degenerate, skew-symmetric, and satisfies $\boldsymbol{\omega}^S_u(JX,JY)=-\boldsymbol{\omega}^S_u(X,Y)$;  
    \item $\nabla_u$ is flat (since $\nabla^+_u$ and $\nabla^-_u$ are flat) and preserves $J$ (because it preserves the splitting) and $\boldsymbol{\omega}^S_u$ (because $\nabla_u$ preserves $\langle\!\langle\cdot,\cdot\rangle\!\rangle$ by the duality condition).
\end{itemize}  

Hence $(H_u,J,\boldsymbol{\omega}^S_u,\nabla_u)$ is para-Kähler. The dependence on $u$ is formal, so the family is a formal deformation of the $u=0$ fibre.
\end{proof}

\subsection{The Extended Bundle}
To incorporate the deformation parameter into the connection, we extend $H$ by the line bundle $L$ spanned by the vector field $\partial_u$ (tangent to $D$). 
Define
\[
\widetilde{H} = H \oplus L,
\]
which is a vector bundle over $B\times D$ of rank $2\dim B+1$. 

\begin{remark}
  $\widetilde{H}$ is a convenient geometric setting to incorporate the deformation parameter. However, it does not itself carry a para‑Kähler structure.
\end{remark}

\section{Family of Legendre Bundles $\&$ Hessian QFTs}\label{S:Main}
In what follows we work in the category of formal manifolds over the formal disk $D = \operatorname{Spec} k[[u]]$, where $u$ is a deformation parameter. In this framework, the metric is exactly the Zamolodchikov metric (when the theory is conformal), and the flat connections are those induced by the flat coordinates. The deformation parameter $u$ then controls the quantum deformation of the classical dually flat structure.

\subsection{Statement}
With these assumptions, given the coupling space $\mathcal{T}$ of a Hessian QFT (with the formal parameter $u$) carries a natural \emph{family of Legendre bundles} over $\mathcal{T} \times D$, as defined in Definition~\ref{D:FLB}. More concretely we prove the following: 

 \begin{thm}[Family of Legendre bundles and QFTs]
\label{T:1}
Let $\cT$ be the coupling space of a Hessian QFT (defined as in Definition~\ref{D:gQFT}) and let $F(t,u)$ be its free energy (a formal power series in $u$ with coefficients in $C^\infty(\cT)$). Assume that for each fixed $u$ (in the formal sense), $F(\cdot,u)$ is strictly convex and the Hessian metric
$g_{ij}(t,u) = \partial_i \partial_j F(t,u)$ is positive definite.

Then the coupling space $\cT$ carries a natural family of Legendre bundles over $\cT \times D$ with $D = \mathrm{Spec}\, k[[u]],$ given by:
\[
\bigl( H,\; \langle\!\langle\cdot,\cdot\rangle\!\rangle,\; \nabla^+,\; \nabla^-,\; \Psi \bigr),
\]
where:
\begin{itemize}
    \item $H = H^+ \oplus H^-$ with $H^+ = \pi_{\mathcal{T}}^*T\mathcal{T}$, $H^- = \pi_{\mathcal{T}}^*T^*\mathcal{T}$;
    \item $\langle\!\langle\cdot,\cdot\rangle\!\rangle$ is the canonical evaluation between tangent and cotangent vectors, extended to vanish on $H^+$ and $H^-$ individually;
    \item $\nabla^+$ and $\nabla^-$ are the flat connections induced by the affine coordinates on $\mathcal{T}$ and on the dual space $\mathcal{T}^*$, respectively;
    \item $\Psi(t,u) = F(t,u)$ is the free energy.
\end{itemize}
\end{thm}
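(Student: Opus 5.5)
The plan is to verify the axioms of Definition~\ref{D:FLB} one by one for the proposed data, reducing as much as possible to the classical case via Proposition~\ref{P:2.4}. First I fix the global affine coordinates $(t^1,\dots,t^n)$ on $\cT$ provided by condition (1) of Definition~\ref{D:gQFT}. I set $H^+=\pi_{\cT}^*T\cT$ and $H^-=\pi_{\cT}^*T^*\cT$, with frames $\{\partial_i\}$ and $\{dt^i\}$. Both are pulled back from $\cT$, so the identifications $H^\pm\cong\pi_{\cT}^*T\cT,\pi_{\cT}^*T^*\cT$ are automatically independent of $u$. The bilinear form is
\[
\langle\!\langle X\oplus\alpha, Y\oplus\beta\rangle\!\rangle=\alpha(Y)+\beta(X),
\]
extended $k[[u]]$-bilinearly. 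Exactly as in part 1 of the proof of Proposition~\ref{P:2.4}, it is symmetric, non-degenerate, and isotropic on $H^\pm$, and its mixed part is the evaluation pairing, with $\langle\!\langle\partial_i,dt^j\rangle\!\rangle=\delta_i^j$.

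Next I define the connections on $\cT\times D$. I declare $\partial_i$ to be $\nabla^+$-parallel and $dt^i$ to be $\nabla^-$-parallel, in all directions including $\partial_u$. Both connections are trivial in global frames, so they are flat and torsion-free on the base directions. The duality condition then holds on frames, since $Z\,\delta_i^j=0$, and it extends to arbitrary sections by the Leibniz rule. On $\cT^*$ the dual affine coordinates are $\eta_i=\partial_iF$, and the connection $\nabla^-$ is the one induced by them. To reconcile this with the choice above, I invoke part 2 of the proof of Proposition~\ref{P:2.4}: with respect to $g=D^2F$, the $g$-dual of $\nabla^+$ is exactly the connection for which the $\eta$-coordinates are affine. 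Transported through the Legendre morphism $\mathscr{L}_{F}(\partial_j)=(\partial_i\partial_jF)\,dt^i$, this is the connection claimed in the theorem. Since $g$ is positive definite for each $u$, the morphism $\mathscr{L}_F$ is invertible over $k[[u]]$.

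Finally I take $\Psi=F=F_0+uF_1+\cdots$. The hypothesis gives that the Hessian of $F(\cdot,u)$ is positive definite in the formal sense, which means the constant term $D^2F_0$ is positive definite. That is precisely the condition in Definition~\ref{D:FLB}. Then, for each fixed $u$, the data $(H_u,\langle\!\langle\cdot,\cdot\rangle\!\rangle,\nabla^\pm_u,F(\cdot,u))$ satisfy Definition~\ref{D:Legendre_Bundle}, by Definition~\ref{D:gQFT}(3) together with Proposition~\ref{P:2.4} applied coefficientwise. Naturality follows because every choice above is determined by the affine structure and by $F$.

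The main obstacle is making ``for each fixed $u$ in the formal sense'' rigorous. Convexity and invertibility have to be handled over $k[[u]]$ rather than pointwise. I would treat it as follows. A symmetric matrix $A_0+uA_1+\cdots$ with $A_0$ positive definite is invertible in $M_n(C^\infty(\cT)[[u]])$, with inverse given by a Neumann series. So $\mathscr{L}_F$ and the inverse metric $g^{ij}$ are well defined, and the identities of Proposition~\ref{P:2.4} hold order by order in $u$. A second, smaller point is the duality condition along $\partial_u$. I would make sure that $\nabla^-$ is chosen $u$-independent in the frame $dt^i$, rather than in $d\eta_i$, since $\eta$ depends on $u$. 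This is what makes the duality condition along $\partial_u$ hold trivially.
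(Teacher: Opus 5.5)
Your proposal is correct and follows essentially the same route as the paper: for each $u$ you apply Proposition~\ref{P:2.4} to the dually flat structure $(\cT,F(\cdot,u))$, and you use the $u$-independence of $H$, the pairing and the connections to assemble the data into a family over $\cT\times D$. You also make explicit what the paper leaves implicit: the duality condition along $\partial_u$; the identification, via $\mathscr{L}_F$, of the $\eta$-affine connection with the $\nabla^-$ for which $dt^i$ is parallel (this choice is forced already in the base directions, since $\langle\!\langle \partial_j, d\eta_i\rangle\!\rangle = g_{ij}$ is not constant in general); and the formal invertibility of the Hessian over $k[[u]]$.
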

The data satisfy all conditions of Definition~\ref{D:FLB} for each fixed $u$, and at $u=0$ the family reduces to the classical Legendre bundle of the tree‑level theory. In the zero‑dimensional case $M=\{\mathrm{pt}\}$, this recovers precisely the Legendre bundle of an exponential family.
\begin{proof}
By definition of a Hessian QFT, for each $u$ the pair $(\cT, F(\cdot,u))$ is a dually flat manifold.  
By Proposition~\ref{P:2.4}, each such dually flat manifold yields a Legendre bundle over $\cT$ with 
\[
    H = T\cT \oplus T^*\cT,
\] 
the canonical pairing, flat connections $\nabla^+$ and $\nabla^-$, and potential $F(\cdot,u)$.  

Because the constructions are independent of $u$ (the bundle and connections do not involve $u$-derivatives), these data assemble into a family over 
\[
    \cT \times D, \qquad D = \mathrm{Spec}\, k[[u]],
\] 
with potential 
\[
    \Psi(t,u) = F(t,u).
\] 
This family satisfies therefore Definition~\ref{D:FLB} by construction.

 \end{proof}

We provide the following corollary, based on the previous results.  

\begin{cor}\label{C:QFTParaKahler}
    Every Hessian QFT provides a family of para-Kähler vector bundles parameterized by the formal deformation parameter $u$, with the classical case $u=0$ corresponding to the tree-level limit.
\end{cor}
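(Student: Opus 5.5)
The plan is to chain Theorem~\ref{T:1} with Proposition~\ref{P:FamilyParaKahler}, and then check separately what happens at $u=0$. Start from a Hessian QFT with coupling space $\cT$ and free energy $F(t,u)=F_0(t)+uF_1(t)+\cdots$. Definition~\ref{D:gQFT} gives exactly the hypotheses of Theorem~\ref{T:1}: for each formal $u$, $F(\cdot,u)$ is strictly convex and $g_{ij}=\partial_i\partial_jF$ is positive definite. So Theorem~\ref{T:1} produces a family of Legendre bundles $(H,\langle\!\langle\cdot,\cdot\rangle\!\rangle,\nabla^+,\nabla^-,\Psi=F)$ over $\cT\times D$ in the sense of Definition~\ref{D:FLB}, with $H^+=\pi_{\cT}^*T\cT$ and $H^-=\pi_{\cT}^*T^*\cT$.

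Next, apply Proposition~\ref{P:FamilyParaKahler} to this family. It gives, for each formal $u$, a para-K\"ahler vector bundle $(H_u,J,\boldsymbol{\omega}^S_u,\nabla_u)$ over $\cT$. Here $J$ is $\pm\mathrm{id}$ on $H^\pm_u$, $\boldsymbol{\omega}^S_u(X,Y)=\langle\!\langle JX,Y\rangle\!\rangle$, and $\nabla_u=\nabla^+_u\oplus\nabla^-_u$. Proposition~\ref{P:FamilyParaKahler} also shows that the family is a formal deformation of the $u=0$ fibre, which is precisely what ``parameterized by the formal deformation parameter $u$'' means.

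The last step identifies the $u=0$ fibre with the tree-level theory. Setting $u=0$ in $F$ leaves $F_0$, the genus-zero (tree-level) free energy. By the Remark after Definition~\ref{D:FLB}, the $u=0$ restriction is the classical Legendre bundle with potential $\psi=F_0$, and Proposition~\ref{P:ParaKahler} makes it para-K\"ahler. For the zero-dimensional case, the statement after Theorem~\ref{T:1} identifies this fibre with the exponential-family Legendre bundle.

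The main obstacle is a conceptual subtlety, not a computation: making sense of ``for each fixed $u$'' over $\operatorname{Spec}k[[u]]$. The only $k$-point is $u=0$, so a literal fibrewise reading gives nothing beyond the classical bundle. I would resolve this by reading all structures as $k[[u]]$-linear objects on $H$, flat relative to $D$. The key observation is that none of $J$, $\langle\!\langle\cdot,\cdot\rangle\!\rangle$ or $\nabla^\pm$ depends on $u$: only $\Psi$ does. So the para-K\"ahler axioms hold identically in $u$, and this needs no convexity argument; convexity of $F$ enters only in building the Legendre morphism. Positivity of the Hessian in the formal sense should be read as positivity of $\partial_i\partial_jF_0$, with higher orders arbitrary, as in Definition~\ref{D:FLB}.
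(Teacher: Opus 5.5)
Your proposal is correct and follows the same route as the paper, which derives the corollary directly from Theorem~\ref{T:1} and Proposition~\ref{P:FamilyParaKahler} and then identifies the $u=0$ member with the tree-level potential $F_0$. Your further observation that $J$, $\langle\!\langle\cdot,\cdot\rangle\!\rangle$ and $\nabla^\pm$ do not depend on $u$ is a valid clarification: the para-K\"ahler axioms then hold identically over $k[[u]]$ without any use of convexity, which makes precise the phrase ``for each fixed $u$'' that the paper leaves implicit.
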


The tree-level limit here refers to setting the formal deformation parameter $u = 0$. In quantum field theory, $u$ is often identified with $\hbar$ (or the genus expansion parameter). The tree-level approximation is the classical limit where quantum loops are neglected; it corresponds to the leading term $F_0(t)$ in the expansion
\[
F(t,u) = F_0(t) + u F_1(t) + u^2 F_2(t) + \cdots.
\]
At $u = 0$, the free energy reduces to $F_0(t)$, which is exactly the strictly convex potential of a dually flat (Hessian) manifold — the structure underlying classical exponential families in information geometry. Hence, the tree-level limit recovers the statistical (or classical) case.

\section*{Conclusion}

We have introduced the notion of a \emph{Legendre bundle}---a geometric structure that encapsulates the essential duality of dually flat (Hessian) manifolds. By demonstrating that both exponential families in information geometry and a large class of quantum field theories (which we named Hessian QFTs) naturally realise this structure, we have built a rigorous bridge between two previously separate domains. The Legendre bundle provides a common language: at $u=0$ it recovers the classical geometry of statistical inference (Fisher metric, exponential families), while the formal parameter $u$ encodes quantum corrections, connecting with the free energy of topological and conformal field theories.

The formal power series $\Psi(t,u)=F(t,u)$ is strongly reminiscent of generating functions that appear in the theory of D‑modules and deformation quantization, where a formal parameter deforms a classical commutative structure into a non‑commutative or flat connection‑based one. Notably, the appearance of $\Psi(t,u)$ as a generating function for a family of Legendre bundles also echoes the role of the genus expansion in Gromov–Witten theory, where the free energy encodes invariants such as gravitational descendants. In this context, the Legendre bundle framework may provide a new geometric perspective on the formal deformation structures underlying the constructions in~\cite{K}, as it draws inspiration from the $F$-bundles defined therein.

It is natural to ask whether, under additional strong assumptions (e.g., flatness of the Hessian metric), the family of Legendre bundles could give rise to an $F$-bundle structure (see \cite{K,KKP}). However, this is not the case in general and is left for future work. Nevertheless, the family of Legendre bundles over the formal disk $D = \operatorname{Spec} k[[u]]$ can be viewed as a geometric realisation of a formal deformation of a dually flat manifold. The flat connections $\nabla^+_u$ and $\nabla^-_u$ on $H^+$ and $H^-$ are naturally adapted to the Legendre duality and may serve as a starting point for a quantisation of Hessian manifolds via deformation quantization.

In summary, the Legendre bundle framework reveals that Legendre duality is not just a common computational tool but a unifying geometric principle  . By making this principle explicit and rigorous, we provide a new perspective that we hope will inspire further cross‑disciplinary research.

\Addresses


\begin{thebibliography}{99}
\bibitem{CV} S. Cecotti and C. Vafa, {\sl Topological–anti-topological fusion}, Nucl. Phys. B {\bf 367} (1993), pp. 359–461.
\bibitem{Ch} N. N. Chentsov {\sl Statistical Decision Rules and Optimal Inference} Translations of Mathematical Monographs, Vol. {\bf 53}
American Mathematical Society, (1982). 
\bibitem{CCN} N. Combe, P. Combe and H. Nencka {\sl Exploring Information Geometry: Recent Advances and Connections to Topological Field Theory}  Frontiers in Mathematics, Birkhäuser Cham 228, (to appear) June 2026.
\bibitem{CM} N. Combe and Y. I. Manin, {\sl F-manifolds and geometry of information,}
Bull. Lond. Math. Soc. {\bf 52} (2020), no. 5, pp. 777–792.
\bibitem{D} B. Dubrovin, {\sl Geometry of 2D topological field theories}, in Integrable Systems and Quantum Groups, Lecture Notes in Math. {\bf 1620}, Springer, (1996), pp. 120–348.
\bibitem{K} M. Kontsevich, {\sl  Birational Invariants from Gromov-Witten Theory}, Lectures I-IV at IHES, (Nov.- Dec. 2023).

\bibitem{KKP}  L. Katzarkov, M. Kontsevich, T. Pantev,
{\sl Hodge theoretic aspects of mirror symmetry},
in From Hodge Theory to Integrability and TQFT,
Proceedings of Symposia in Pure Mathematics, Vol.{\bf  78,}
American Mathematical Society, 2008, pp. 87–174.
\end{thebibliography}
\end{document}